\documentclass[12pt,reqno]{amsart}
\usepackage{amssymb}
\usepackage{amsmath, mathtools}

\usepackage{amsthm}

\usepackage{amscd}

\newcommand{\RNum}[1]{\uppercase\expandafter{\romannumeral #1\relax}}

\usepackage{caption}

\usepackage[T2A]{fontenc}
\usepackage[utf8]{inputenc}
\usepackage[english]{babel}

\input{int.def} 

\usepackage[sort]{cite}
\usepackage{tikz-cd}
\usetikzlibrary{cd}
\usepackage{dirtytalk}
\usepackage[linktoc=page, colorlinks, linkcolor=blue, citecolor=blue]{hyperref}

\usepackage{xcolor}
\usepackage{centernot}

\usepackage{enumitem}

\usepackage{pgfplots}
\usepackage{multicol}

\pgfplotsset{compat=1.17}

\makeatletter
\renewenvironment{proof}[1][\proofname]{%
  \par\vspace{\topsep}%
  \normalfont\topsep6\p@\@plus6\p@\relax
  \trivlist
  \item[\hskip\labelsep\itshape #1\@addpunct{.}]\ignorespaces
}{%
  \endtrivlist
}
\makeatother

\numberwithin{equation}{section}

\DeclarePairedDelimiterX \ip[2]{\langle}{\rangle}{#1,#2}
\DeclarePairedDelimiterXPP \Prob[1]{\mathbb{P}}\{\}{}{ #1} 
\DeclarePairedDelimiterXPP \Probevent[1]{\mathbb{P}}(){}{#1} 

\DeclareMathOperator{\E}{\mathbb{E}}

\usepackage[mathcal]{euscript}

\usepackage{titlesec}
\titleformat{\section}[runin]{\bfseries}{\thesection.}{3pt}{}[.]

\DeclareMathOperator{\Cov}{Cov} \newcommand{\MTP}{\mathrm{MTP}_2}

\usepackage{geometry}
\newgeometry{vmargin={25mm}, hmargin={22mm,22mm}, footskip=10mm}   

\begin{document}
\title[MaxCut for \(\mathrm{MTP}_2\) Covariances
]{MaxCut for \(\mathrm{MTP}_2\) Covariances}

\author{Gleb Smirnov}
\address{
Mathematical Sciences Institute, 
Australian National University, Canberra, Australia}
\email{gleb.smirnov@anu.edu.au}


\begin{abstract}
Let $X=(X_1,\dots,X_n)\in\{0,1\}^n$ have a multivariate totally positive ($\mathrm{MTP}_2$) law. We prove that $$ \sum_{i<j}\mathbb{E}\left[\left|\mathrm{Cov}(X_i,X_j \mid X_{[n]\setminus\{i,j\}})\right|\right] \le n/2, $$ and more generally a weighted MaxCut inequality for the fully conditioned covariances. As an application, we confirm a conjecture of Allen and O'Donnell on correlation rounding for signed $\mathrm{MTP}_2$ laws.
\end{abstract}

\maketitle
\setcounter{section}{0}

\section{Main results}\label{intro}

Let \(\mu\) be a probability measure on \(\{0,1\}^n\), and let
\[
X=(X_1,\dots,X_n)\sim\mu.
\]
Recall that \(\mu\) is multivariate totally positive of order two (\(\MTP\)) if
\[
\mu\bigl(\min\{x,y\}\bigr)\,
\mu\bigl(\max\{x,y\}\bigr)
\ge
\mu(x)\mu(y)
\qquad
\forall x,y\in\{0,1\}^n,
\]
where \(\min\) and \(\max\) are taken coordinatewise.
\smallskip%

For \(A\subset[n]\), let
\[
X_A=(X_k)_{k\in A}.
\]
Define the symmetric matrix \(C=C(X)\in\mathbb R^{n\times n}\) by
\[
C_{ii}=0
\]
and, for \(i\ne j\),
\[
C_{ij}
=
\E\!\left[
\left|
\Cov\!\left(
X_i,X_j
\,\middle|\,
X_{[n]\setminus\{i,j\}}
\right)
\right|
\right].
\]
Thus \(C_{ij}\) is the average absolute covariance of \(X_i\) and \(X_j\)
after all other coordinates have been revealed.
\smallskip%

Our main result is the following weighted MaxCut inequality.

\begin{theorem}\label{thm:main}
Suppose that
\[
X=(X_1,\dots,X_n)\in\{0,1\}^n
\]
has an \(\MTP\) law. Then, for every collection of nonnegative weights
\(w_{ij}\),
\[
\sum_{i<j} w_{ij} C_{ij}
\le
2\operatorname{MaxCut}(G,w)-W,
\]
where
\[
W=\sum_{i<j}w_{ij}.
\]
Here \(G\) is the weighted graph on the vertex set \([n]\), with weight \(w_{ij}\) on the edge \(\{i,j\}\).
\end{theorem}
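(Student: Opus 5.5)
Write \(2\operatorname{MaxCut}(G,w)-W=\max_{\sigma\in\{\pm1\}^n}\sum_{i<j}w_{ij}\,\tfrac{1-\sigma_i\sigma_j}{2}\cdot 2-W=\max_{\sigma}\bigl(-\sum_{i<j}w_{ij}\sigma_i\sigma_j\bigr)\). A maximum dominates any average, so it suffices to exhibit a single random sign vector \(\sigma\in\{\pm1\}^n\), independent of the weights, with
\[
\E[\sigma_i\sigma_j]\le -C_{ij}\qquad\text{for all } i\ne j .
\]
Multiplying by \(w_{ij}\ge 0\) and summing then gives the theorem for every weight system at once. The weights play no role beyond this. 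The whole proof therefore reduces to constructing such a negatively correlated sign field out of the \(\MTP\) law \(\mu\).

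First I would simplify \(C_{ij}\). Fix \(z=x_{[n]\setminus\{i,j\}}\) and write \(p_{ab}=\mu(X_i=a,X_j=b\mid z)\). The \(\MTP\) inequality applied to the points \((1,0,z)\) and \((0,1,z)\) gives \(p_{11}p_{00}\ge p_{10}p_{01}\). Hence the conditional covariance \(p_{11}p_{00}-p_{10}p_{01}\) is nonnegative, the absolute value can be dropped, and
\[
C_{ij}=\E_z\bigl[p_{11}p_{00}-p_{10}p_{01}\bigr].
\]
Each such term can also be written as \(\tfrac12\E[(a-a')(b-b')\mid z]\) for two conditionally independent draws \((a,b),(a',b')\) of \((X_i,X_j)\) given \(z\). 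This representation suggests building \(\sigma\) from two coupled copies of \(X\), with \(\sigma_i\) recording the direction of the discrepancy between the copies at site \(i\), flipped so that positive co-movement becomes anti-alignment.

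Concretely, my first attempt is the following. Draw \(X\sim\mu\). Form \(X'\) by one heat-bath (Glauber) resampling of every coordinate, each given the others in \(X\). At sites where \(X_i\ne X'_i\), set \(\sigma_i=\pm(X_i-X'_i)\), with the sign alternated by an independent uniform global sign \(\varepsilon\) applied to a fixed half of the roles. At sites where the two copies agree, choose \(\sigma_i\) uniformly at random, so that those sites contribute \(0\) to \(\E[\sigma_i\sigma_j]\).

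The task is then to expand \(\E[\sigma_i\sigma_j]\) pair by pair and compare it with \(-(p_{11}p_{00}-p_{10}p_{01})\) averaged over \(z\). For this comparison I would use the \(\MTP\) inequalities once more, now involving the remaining coordinates, to control the cross terms coming from correlations of \((i,j)\) with the rest of the configuration.

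The main obstacle is exactly this construction. One must produce a joint law on \(\{\pm1\}^n\) whose pairwise correlations simultaneously realize the whole matrix \(-C\), or something below it. The difficulty is that the natural couplings give sign fields that are positively correlated, again by \(\MTP\), or that are products of i.i.d. factors and hence have nonnegative correlations. If the two-copy construction fails, the fallback is a duality route. The claim is equivalent to \(-C\) dominating, entrywise from above, some point of the cut polytope, i.e.\ to \(\sum w_{ij}C_{ij}\le 2\operatorname{MaxCut}-W\) for all \(w\ge0\). One could then try to prove the inequality directly for each fixed \(w\) by induction on \(n\): condition on \(X_n\), use that conditional laws of \(\MTP\) measures are again \(\MTP\), and track how \(C_{ij}\) and \(\operatorname{MaxCut}\) change when vertex \(n\) is removed.
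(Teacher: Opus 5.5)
Your reduction is correct, and it is the same one the paper uses at the end of the proof of Theorem~\ref{thm:quantum}. Since $2\operatorname{MaxCut}(G,w)-W=\max_{s}\bigl(-\sum_{i<j}w_{ij}s_is_j\bigr)$, it suffices to produce one random $S\in\{-1,1\}^n$ with $\E[S_iS_j]\le -C_{ij}$ for all $i\ne j$.

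There is a genuine gap, though. Producing that $S$ is the entire content of the theorem, and your proposal does not do it.

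\textbf{The heat-bath construction fails.} Take $X_1=X_2\sim\mathrm{Bernoulli}(1/2)$ and $X_3$ an independent fair bit; this is an $\MTP$ law. Then $C_{12}=\operatorname{Var}(X_1)=1/4$. But the single-site conditional laws of $X_1$ and $X_2$ are point masses, so $X'_1=X_1$ and $X'_2=X_2$. Your $\sigma_1,\sigma_2$ are then independent uniform signs, and $\E[\sigma_1\sigma_2]=0>-1/4$. The underlying problem is that single-site resampling only sees laws conditioned on $n-1$ coordinates, while $C_{ij}$ is a two-site quantity conditioned on $n-2$ coordinates. A pair that is rigid given everything else has degenerate single-site conditionals, yet it can have maximal $C_{ij}$.

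\textbf{The induction fallback is not yet an argument.} Conditioning on $X_n$ does handle the pairs inside $[n-1]$: each such $C_{ij}$ is the average over $x_n$ of the same quantity for the conditional law, which is again $\MTP$. But the inductive hypothesis gives no control over the new pairs $\{i,n\}$. For instance, a field on $[n-1]$ with $\sum_{i<n}\sigma_i=0$ forces $\sum_{i<n}\E[\sigma_i\sigma_n]=0$, so the constraints $\E[\sigma_i\sigma_n]\le -C_{in}$ cannot all hold once some $C_{in}>0$.

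\textbf{The missing idea is the square-root amplitude.} The paper takes $\psi=\sum_x\sqrt{\mu(x)}\,e_x$ and lets $S$ be the outcome of measuring the commuting Pauli observables $Y_1,\dots,Y_n$. Concretely, with $|x|=\sum_k x_k$,
\[
\mathbb P(S=s)=2^{-n}\Bigl|\sum_{x}(-i)^{|x|}\sqrt{\mu(x)}\,\prod_k s_k^{x_k}\Bigr|^2 .
\]
This law is quadratic in $\sqrt{\mu}$; it is not a coupling of copies of $X$. Then $\E[S_iS_j]=(\psi,Y_iY_j\psi)$. Splitting over the values $z$ of the other $n-2$ coordinates (Lemma~\ref{total}) gives
\[
\E[S_iS_j]=-2\sum_z p_z\bigl(\sqrt{a_zd_z}-\sqrt{b_zc_z}\bigr),
\]
where $(a_z,b_z,c_z,d_z)$ is the conditional table of $(X_i,X_j)$. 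The $\MTP$ face inequality makes each bracket nonnegative. Finally,
\[
a_zd_z-b_zc_z=\bigl(\sqrt{a_zd_z}-\sqrt{b_zc_z}\bigr)\bigl(\sqrt{a_zd_z}+\sqrt{b_zc_z}\bigr)\le 2\bigl(\sqrt{a_zd_z}-\sqrt{b_zc_z}\bigr),
\]
which yields $\E[S_iS_j]\le -C_{ij}$. That is exactly the sign field your reduction needs.
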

For a partition
\[
[n]=A\sqcup A^c,
\]
the weight of the corresponding cut is
\[
\operatorname{Cut}_w(A)
=
\sum_{\substack{i\in A\\ j\in A^c}} w_{ij},
\]
and \(\operatorname{MaxCut}(G,w)\) is the maximum weight of such a cut.
\smallskip%

The theorem also applies after arbitrary flips of the coordinates. Recall that the law of \(X\) is signed \(\MTP\) if there exist
\(\varepsilon_1,\ldots,\varepsilon_n\in\{0,1\}\) such that the random vector
\[
Y=(Y_1,\ldots,Y_n),
\qquad
Y_i=
\begin{cases}
X_i, & \varepsilon_i=0,\\
1-X_i, & \varepsilon_i=1,
\end{cases}
\]
has an \(\MTP\) law. Indeed, flipping a coordinate changes the sign of the corresponding
conditional covariance but not its absolute value. Hence:
\[
\left|
\Cov\!\left(
Y_i,Y_j
\,\middle|\,
Y_{[n]\setminus\{i,j\}}
\right)
\right|
=
\left|
\Cov\!\left(
X_i,X_j
\,\middle|\,
X_{[n]\setminus\{i,j\}}
\right)
\right|.
\]
Consequently, \(C(Y)=C(X)\).
\smallskip%

We now present an immediate unweighted consequence. Take
\[
w_{ij}=1.
\]
Then \(G\) is the complete graph with unit weights,
\[
W=\binom n2,
\]
and
\[
\operatorname{MaxCut}(G)
=
\left\lfloor\frac{n^2}{4}\right\rfloor.
\]
Theorem~\ref{thm:main} gives:
\[
\sum_{i<j}C_{ij}
\le
2\left\lfloor\frac{n^2}{4}\right\rfloor
-
\binom n2 \le
\frac n2.
\]
Consequently,
\begin{equation}\label{eq:average}
\sum_{i<j}
\E\!\left[
\left|
\Cov\!\left(
X_i,X_j
\,\middle|\,
X_{[n]\setminus\{i,j\}}
\right)
\right|
\right]
\le
\frac n2.
\end{equation}
The proof of Theorem~\ref{thm:main} is short and elementary, but only after passing to the language of quantum probability. Following Williams~\cite[Chapter~10]{Williams}, we associate to \(\mu\) its 
square-root \(n\)-qubit state. The \(\MTP\) condition becomes log-supermodularity of the amplitudes. We first prove the weighted MaxCut inequality for suitable two-qubit
correlations of this state. We then bound the conditional covariances by these quantum correlations and transfer the MaxCut inequality back to \(C\).
\smallskip%

Despite its simple form, Theorem~\ref{thm:main} has applications in theoretical computer science. In \S\,\ref{ODonnell}, we use
\eqref{eq:average} to confirm a conjecture of Allen, O'Donnell, and Zhou for signed \(\MTP\) laws.
\smallskip%

We shall use the standard fact that \(\MTP\) is preserved under marginalization. See~\cite{Karlin-Rinott}. On the Boolean lattice, the \(\MTP\) condition is the classical FKG lattice condition. See~\cite{FKG, Grimmett}.

\medskip
\noindent\textbf{Acknowledgment and AI assistance.}
The author thanks OpenAI's ChatGPT for drawing his attention to Conjectures A and B in \cite{AllenODonnell}, and for pointing out that the results of this paper advance Conjecture A and fully prove Conjecture B. The proof of Conjecture B is not included here, but it is not difficult.

\section{Commuting Pauli observables}\label{quantum}
We follow Williams~\cite[Chapter~10]{Williams} 
for the elementary 
quantum-probability formalism used below.
\smallskip%

Let 
\[ 
H_n=(\mathbb C^2)^{\otimes n}, \qquad e_0= \begin{pmatrix}1\\0\end{pmatrix}, \qquad e_1= \begin{pmatrix}0\\1\end{pmatrix}. 
\] 
For \(x=(x_1,\ldots,x_n)\in\{0,1\}^n\), write: 
\[ 
e_x=e_{x_1}\otimes\cdots\otimes e_{x_n}. 
\] 
The vectors \(e_x\) 
form the computational basis of \(H_n\).
\smallskip%

A pure state is a unit vector 
\[ 
\psi=\sum_x\psi(x)e_x. 
\] 
Measurement in the computational basis produces \(x\) with probability 
\[ 
|\psi(x)|^2. 
\] 
If \(A\) is a Hermitian operator on \(H_n\), 
its mean in the state \(\psi\) is 
\[ 
\E_\psi[A]=(\psi,A\psi). 
\]
We use the Pauli matrix: 
\[ 
Y= \begin{pmatrix} 0&-i\\ i&0 \end{pmatrix}.
\] 
It satisfies:
\[ 
Y^*=Y, \qquad Y^2=I, \qquad Ye_0=ie_1, \qquad Ye_1=-ie_0. 
\]
Let \(Y_i\) denote \(Y\) acting on the \(i\)-th tensor factor and the identity on the others. 
Then: 
\[ 
Y_i^*=Y_i, \qquad Y_i^2=I, \qquad Y_iY_j=Y_jY_i. 
\]
In particular, \(Y_i\) are commuting Hermitian operators, and each has eigenvalues \(\pm1\).
\smallskip%

A state 
\[ 
\psi=\sum_x\psi(x)e_x 
\] 
is called positive log-supermodular if \(\psi(x)\ge0\) for every \(x\) and 
\[ 
\psi(x\wedge y)\psi(x\vee y) \ge \psi(x)\psi(y) \qquad (x,y\in\{0,1\}^n). 
\]
Recall the quantum analogue of the law of total
expectation.
\begin{lemma}\label{total}
Let \(\psi\) be a unit state. 
Let \((P_z)_{z \in \mathcal Z}\) be
orthogonal projections such that:
\[
\sum_zP_z=I.
\]
Set:
\[
p_z=(\psi,P_z\psi).
\]
For \(p_z>0\), let
\[
\psi_z=\frac{P_z\psi}{\sqrt{p_z}}
\]
be the state conditioned on the measurement outcome \(z\).
If \(A\) commutes with every \(P_z\), then:
\[
(\psi,A\psi)
=
\sum_{z}
p_z(\psi_z,A\psi_z),
\]
where terms \(p_z=0\) are omitted.
\end{lemma}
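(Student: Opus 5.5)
The plan is a direct computation that uses only the commutation hypothesis and the resolution of the identity. First, I will insert \(I=\sum_z P_z\) on the left and write
\[
(\psi,A\psi)=\sum_{z,z'}(P_z\psi,\,A P_{z'}\psi),
\]
which uses that each \(P_z\) is self-adjoint. Next, I will show that the off-diagonal terms vanish. Because \(A\) commutes with \(P_{z'}\), we have \(AP_{z'}\psi=P_{z'}A\psi\). Hence
\[
(P_z\psi,AP_{z'}\psi)=(P_z\psi,P_{z'}A\psi)=(P_{z'}P_z\psi,A\psi).
\]

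To finish this step I need \(P_{z'}P_z=0\) for \(z\ne z'\). This follows from \(\sum_z P_z=I\) with each \(P_z\) an orthogonal projection. For any unit vector \(v\) in the range of \(P_z\),
\[
1=\|v\|^2=\sum_{w}(v,P_w v)=1+\sum_{w\neq z}\|P_w v\|^2 ,
\]
so \(P_w v=0\) for all \(w\ne z\). Therefore the projections have mutually orthogonal ranges, and all cross terms drop out. I expect this to be the only point needing care, and it is routine. If the paper instead intends \((P_z)\) to be a family of commuting spectral projections, this orthogonality is immediate anyway.

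What remains is
\[
(\psi,A\psi)=\sum_z(P_z\psi,AP_z\psi).
\]
If \(p_z=\|P_z\psi\|^2=0\), then \(P_z\psi=0\) and the term vanishes, so it may be omitted. Otherwise I substitute \(P_z\psi=\sqrt{p_z}\,\psi_z\), which gives \(p_z(\psi_z,A\psi_z)\) and proves the identity.

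For later use I will also note that \(\psi_z\) is a unit vector, since \(\|P_z\psi\|^2=(\psi,P_z\psi)=p_z\), and that \(\sum_z p_z=1\). Together these justify reading the right-hand side as an average of conditional means.
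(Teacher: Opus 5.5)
Your proof is correct, but it is organized differently from the paper's. You expand \((\psi,A\psi)\) as a double sum over \((z,z')\) and then eliminate the off-diagonal terms. That forces you to first prove that the ranges of the \(P_z\) are mutually orthogonal.

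The paper instead starts from the right-hand side and never produces cross terms. It writes
\(p_z(\psi_z,A\psi_z)=(P_z\psi,AP_z\psi)=(\psi,P_zAP_z\psi)=(\psi,P_z^2A\psi)=(\psi,P_zA\psi)\),
using self-adjointness, the commutation \(AP_z=P_zA\), and idempotence. It then sums over \(z\) using only \(\sum_zP_z=I\). Idempotence plus commutation makes each diagonal term linear in \(P_z\). So the step you flagged as the one point needing care, \(P_{z'}P_z=0\) for \(z\ne z'\), is not needed, and the paper's computation is shorter.

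What your route buys is conceptual transparency. It exhibits \(\psi=\sum_zP_z\psi\) as an orthogonal decomposition and shows that \(A\) has no matrix entries between different blocks, which is the underlying reason the identity holds.

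Your argument that orthogonal projections summing to \(I\) must have mutually orthogonal ranges is correct, just superfluous here. In the paper's application each \(H_z\) is spanned by a disjoint set of computational basis vectors, so that orthogonality is evident anyway.
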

\begin{proof}
We calculate:
\begin{align*}
\sum_z p_z(\psi_z,A\psi_z)
&=
\sum_z
\bigl(\sqrt{p_z}\,\psi_z,
      \sqrt{p_z}\,A\psi_z\bigr)
&& 
\\
&=
\sum_z
(P_z\psi,AP_z\psi)
&& \text{\(\sqrt{p_z}\,\psi_z=P_z\psi\)}
\\
&=
\sum_z
(\psi,P_zAP_z\psi)
&& \text{\(P_z^*=P_z\)}
\\
&=
\sum_z
(\psi,P_z^2A\psi)
&& \text{\(AP_z=P_zA\)}
\\
&=
\sum_z
(\psi,P_zA\psi)
&& \text{\(P_z^2=P_z\)}
\\
&=
(\psi,A\psi)
&& \text{\(\sum_zP_z=I\),}
\end{align*}
and the proof follows. \qed
\end{proof}
We now use the commutativity of \(Y_1,\ldots,Y_n\).
\begin{lemma}\label{joint}
Let \(\psi\) be a unit state. Measure the commuting observables \(Y_1,\ldots,Y_n\) simultaneously in the state \(\psi\). Since each \(Y_i\) has eigenvalues \(\pm1\), 
the outcome is a random vector
\[
S=(S_1,\ldots,S_n)\in\{-1,1\}^n.
\]
Then:
\[
\E[S_iS_j]
=
(\psi,Y_iY_j\psi).
\]
\end{lemma}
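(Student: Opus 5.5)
We would prove this by making the simultaneous measurement concrete through the spectral decomposition of the commuting family \(Y_1,\ldots,Y_n\). Each \(Y_i\) is Hermitian with \(Y_i^2=I\), so it splits as \(Y_i=Q_i^{+}-Q_i^{-}\). Here
\[
Q_i^{\pm}=\tfrac12(I\pm Y_i)
\]
are complementary orthogonal projections. Because the \(Y_i\) commute, so do all the \(Q_i^{\pm}\). For \(s\in\{-1,1\}^n\) we therefore set
\[
P_s=\prod_{k=1}^n Q_k^{s_k}=\prod_{k=1}^n \tfrac12(I+s_kY_k).
\]
Each \(P_s\) is an orthogonal projection, being a product of commuting orthogonal projections. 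The \(P_s\) are mutually orthogonal, since for \(s\ne s'\) some factor \(Q_k^{+}Q_k^{-}=0\) appears. Expanding the product gives \(\sum_s P_s=\prod_k(Q_k^++Q_k^-)=I\). The simultaneous measurement is, by definition, the one with outcome \(s\) occurring with probability \(p_s=(\psi,P_s\psi)\). These numbers are nonnegative and sum to \((\psi,\psi)=1\), so \(S\) is a well-defined random vector.

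The key identity is \(Y_kP_s=s_kP_s\). It holds because \(Y_kQ_k^{\pm}=\pm Q_k^{\pm}\), which follows from \(Y_k^2=I\), together with the commutativity of the factors. Consequently
\[
Y_iY_jP_s=s_is_jP_s,
\]
and \(A=Y_iY_j\) commutes with every \(P_s\). Applying Lemma~\ref{total} with this \(A\) and the family \((P_s)\), each conditioned state \(\psi_s=P_s\psi/\sqrt{p_s}\) lies in the range of \(P_s\). So \((\psi_s,Y_iY_j\psi_s)=s_is_j\), and
\[
(\psi,Y_iY_j\psi)=\sum_s p_s\,s_is_j=\E[S_iS_j].
\]
One could also skip Lemma~\ref{total} and write \(Y_iY_j=\sum_s s_is_jP_s\) directly, since \(\sum_sP_s=I\). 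Taking the expectation in \(\psi\) then gives the same identity.

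The main obstacle is not computational. It is making precise what "measuring the commuting observables simultaneously" means, so that the law of \(S\) is given by the joint spectral projections \(P_s\). Once that interpretation is fixed, everything else is the algebra of the projections \(\tfrac12(I\pm Y_k)\).
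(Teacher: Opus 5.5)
Your proof is correct and takes the same route as the paper. The paper invokes a common orthonormal eigenbasis of the commuting \(Y_i\), observes that \(Y_iY_j\) is diagonal there with eigenvalue \(S_iS_j\), and applies the formula for the mean of an observable; your explicit joint eigenprojections \(P_s=\prod_k\tfrac12(I+s_kY_k)\) are exactly the projections onto the joint eigenspaces of that basis, so you have written out the paper's one-line spectral argument, including the meaning of the simultaneous measurement, in full detail.
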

\begin{proof}
Since \(Y_i\) are commuting and Hermitian,
they admit a common orthonormal eigenbasis. 
In this basis, the eigenvalue of \(Y_iY_j\)
is the product \(S_iS_j\); thus, the formula for 
the mean of an observable gives:
\[
\E[S_iS_j]
=
\E_\psi[Y_iY_j]
=
(\psi,Y_iY_j\psi).
\]
\qed
\end{proof}
We prove the quantum result underlying
Theorem~\ref{thm:main}.
\begin{theorem}\label{thm:quantum}
Let
\[
\psi
=
\sum_{x\in\{0,1\}^n}
\psi(x)e_x
\]
be a positive log-supermodular state. Define \(L=L(\psi)\in\mathbb R^{n\times n}\) by
\[
L_{ii}=0,
\qquad
L_{ij}=-(\psi,Y_iY_j\psi)
\quad \text{for \(i \neq j\)}.
\]
Then:
\[
L_{ij} \ge 0
\]
and for each collection of nonnegative weights \(w_{ij}\),
\[
\sum_{i<j}w_{ij}L_{ij}
\le
2\,\operatorname{MaxCut}(G,w)-W,
\]
where
\[
W=\sum_{i<j}w_{ij}.
\]
\end{theorem}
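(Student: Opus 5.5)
The plan is to compute the matrix entries \(L_{ij}\) explicitly in the computational basis, read off nonnegativity from log-supermodularity, and then obtain the MaxCut bound by applying Lemma~\ref{joint} to the random sign vector \(S\). The two parts are independent: positivity uses the structure of \(\psi\), while the cut bound holds for any unit state.

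\emph{Nonnegativity.} From \(Ye_0=ie_1\) and \(Ye_1=-ie_0\) we get \(Ye_a=i(-1)^a e_{1-a}\). Hence
\[
Y_iY_je_x=-(-1)^{x_i+x_j}\,e_{x^{ij}},
\]
where \(x^{ij}\) denotes \(x\) with coordinates \(i\) and \(j\) flipped. Since \(\psi\) is real,
\[
L_{ij}=-(\psi,Y_iY_j\psi)=\sum_x(-1)^{x_i+x_j}\psi(x)\psi(x^{ij}).
\]
Fix the remaining coordinates \(z=x_{[n]\setminus\{i,j\}}\). The configurations \(00\) and \(11\) in positions \(i,j\) pair up under the flip with sign \(+\), and \(01\) and \(10\) pair up with sign \(-\). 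Therefore
\[
L_{ij}=2\sum_{z}\bigl[\psi(z,00)\psi(z,11)-\psi(z,01)\psi(z,10)\bigr].
\]
Each bracket is nonnegative by log-supermodularity applied to \(x=(z,01)\) and \(y=(z,10)\), since \(x\wedge y=(z,00)\) and \(x\vee y=(z,11)\). So \(L_{ij}\ge0\).

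\emph{MaxCut bound.} Measure \(Y_1,\dots,Y_n\) jointly as in Lemma~\ref{joint}, producing \(S\in\{-1,1\}^n\) with \(\E[S_iS_j]=-L_{ij}\). For each outcome, let \(A=\{i:S_i=1\}\). Since \((1-S_iS_j)/2\) equals \(1\) exactly when the edge \(\{i,j\}\) crosses the partition \(A\sqcup A^c\), we have pointwise
\[
\sum_{i<j}w_{ij}\,\frac{1-S_iS_j}{2}=\operatorname{Cut}_w(A)\le \operatorname{MaxCut}(G,w).
\]
Taking expectations gives
\[
\sum_{i<j}w_{ij}\,\frac{1+L_{ij}}{2}\le\operatorname{MaxCut}(G,w),
\]
which rearranges to \(\sum_{i<j}w_{ij}L_{ij}\le 2\operatorname{MaxCut}(G,w)-W\).

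There is no serious obstacle. The only care needed is the sign bookkeeping in \(Y_iY_je_x\): the factor \(i^2=-1\) is exactly what makes \(L_{ij}\), rather than \(-L_{ij}\), the nonnegative quantity.
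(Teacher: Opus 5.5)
Your proof is correct and is essentially the paper's argument. Grouping $\sum_x(-1)^{x_i+x_j}\psi(x)\psi(x^{ij})$ by the outside configuration $z$ is the same block decomposition the paper gets from the projections $P_z$ and Lemma~\ref{total}: each two-qubit face contributes $2(\psi(z,00)\psi(z,11)-\psi(z,01)\psi(z,10))\ge0$ by log-supermodularity, and your MaxCut step via the joint measurement $S$ of Lemma~\ref{joint} matches the paper's; the only difference is that you expand the matrix element directly instead of passing through the normalized conditioned states $\psi_z$.
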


\begin{proof}
Fix distinct \(i,j\in[n]\). 
Measure all qubits other than \(i,j\)
in the computational basis.
An outcome is a configuration
\[
z\in\{0,1\}^{[n]\setminus\{i,j\}}.
\]
\(z\) specifies the values of all coordinates
other than \(i\) and \(j\). Let
\[
H_z
=
\operatorname{span}\{e_{abz}:a,b\in\{0,1\}\},
\]
where \(e_{abz}\) is the computational-basis vector
with \(i\)-th coordinate \(a\),
\(j\)-th coordinate \(b\),
and remaining coordinates \(z\).
Let \(P_z\) be the orthogonal projection onto \(H_z\).
Set:
\[
p_z=(\psi,P_z\psi).
\]
For \(p_z>0\), the conditioned state is
\begin{equation}\label{psi_z}
\psi_z
=
\frac{P_z\psi}{\sqrt{p_z}}
=
\alpha_z e_{11z}
+\beta_z e_{10z}
+\gamma_z e_{01z}
+\delta_z e_{00z}.
\end{equation}
Log-supermodularity on this two-dimensional face gives:
\[
\alpha_z\delta_z
\ge
\beta_z\gamma_z.
\]
\smallskip%

On \(H_z\), the operator \(Y_iY_j\) acts by
\[
Y_iY_j e_{11z}=-e_{00z},
\quad
Y_iY_j e_{10z}=e_{01z},
\quad
Y_iY_j e_{01z}=e_{10z},
\quad
Y_iY_j e_{00z}=-e_{11z}.
\]
Hence:
\[
(\psi_z,Y_iY_j\psi_z)
=
2(\beta_z\gamma_z-\alpha_z\delta_z)
\le0.
\]
Since \(Y_iY_j\) acts only on qubits \(i,j\),
it commutes with every \(P_z\). Lemma~\ref{total} gives:
\[
-L_{ij}
=
(\psi,Y_iY_j\psi)
=
\sum_z
p_z(\psi_z,Y_iY_j\psi_z)
\le0.
\]
Let again
\[
S=(S_1,\ldots,S_n)\in\{-1,1\}^n
\]
be the simultaneous measurement of 
\(Y_1, \ldots, Y_n\). By Lemma~\ref{joint},
\[
\E[S_iS_j]
=
(\psi,Y_iY_j\psi)
=
-L_{ij},
\]
while
\[
\E[S_i^2]=1.
\]
Let \(w_{ij}\ge0\). For \(s\in\{-1,1\}^n\), write:
\[
\operatorname{Cut}_w(s)
=
\sum_{i<j}
w_{ij}\frac{1-s_is_j}{2}.
\]
Then:
\[
-\sum_{i<j}w_{ij}s_is_j
=
2\operatorname{Cut}_w(s)-W.
\]
Consequently,
\[
\sum_{i<j}w_{ij}L_{ij} = 
\E\!\left[
2\operatorname{Cut}_w(S)-W
\right] \le 2\operatorname{MaxCut}(G,w)-W,
\]
and the proof follows.
\end{proof}
\begin{remark}
Log-supermodularity is used only to 
prove \((\psi,Y_iY_j\psi)\le0\). It is not needed for the MaxCut bound.
\end{remark}

\section{From quantum to classical covariances} \label{classical}
Let \(X\sim\mu\), where \(\mu\) 
is \(\MTP\). Consider the square-root state:
\[
\psi
=
\sum_{x\in\{0,1\}^n}
\sqrt{\mu(x)}\,e_x.
\]
Since
\[
\mu(x\wedge y)\mu(x\vee y)
\ge
\mu(x)\mu(y)
\]
is equivalent to
\[
\sqrt{\mu(x\wedge y)}
\sqrt{\mu(x\vee y)}
\ge
\sqrt{\mu(x)}
\sqrt{\mu(y)},
\]
the state \(\psi\) is positive 
log-supermodular.
\smallskip%

Let 
\[ 
L_{ii}=0, \qquad L_{ij}=-(\psi,Y_iY_j\psi) \quad(i\ne j). 
\] 
We compare \(L_{ij}\) with the conditional covariance of
\(X_i\) and \(X_j\).
\begin{lemma}\label{2qubit}
Let \(U,V\in\{0,1\}\) have joint distribution
\[
\begin{array}{c|cc}
 & V=1 & V=0\\ \hline
U=1 & a & b\\
U=0 & c & d
\end{array},
\qquad
a+b+c+d=1,
\]
and suppose
\[
ad\ge bc.
\]
Let
\[
\phi
=
\sqrt a\,e_{11}
+\sqrt b\,e_{10}
+\sqrt c\,e_{01}
+\sqrt d\,e_{00}.
\]
Then:
\[
0\le\Cov(U,V)
\le
-(\phi,(Y\otimes Y)\phi).
\]
\end{lemma}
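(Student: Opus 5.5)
Both sides of the claimed inequality are explicit functions of \(a,b,c,d\), so the plan is to compute each and reduce to an elementary inequality. First, \(\Cov(U,V)=\E[UV]-\E[U]\E[V]=a-(a+b)(a+c)\). Using \(a+b+c+d=1\), this simplifies to \(ad-bc\), which is nonnegative by hypothesis. That gives the left inequality. For the right side, the same action of \(Y\otimes Y\) as in the proof of Theorem~\ref{thm:quantum} (with \(e_{11}\mapsto -e_{00}\), \(e_{10}\mapsto e_{01}\), and so on) yields
\[
-(\phi,(Y\otimes Y)\phi)=2\bigl(\sqrt{ad}-\sqrt{bc}\bigr).
\]
So the claim reduces to
\[
ad-bc\le 2\bigl(\sqrt{ad}-\sqrt{bc}\bigr).
\]

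Write \(p=\sqrt{ad}\) and \(q=\sqrt{bc}\), so that \(p\ge q\ge 0\). Then
\[
ad-bc=(p-q)(p+q),
\]
and it suffices to show \(p+q\le 2\) whenever \(p>q\); the case \(p=q\) is trivial. By AM--GM, \(\sqrt{ad}\le (a+d)/2\) and \(\sqrt{bc}\le (b+c)/2\). Hence
\[
p+q\le \frac{a+b+c+d}{2}=\frac12,
\]
which is much better than needed. In fact it gives the sharper bound \(\Cov(U,V)\le \tfrac12(\sqrt{ad}-\sqrt{bc})\), though only the stated bound is required.

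There is no real obstacle here. The only points needing care are the sign bookkeeping in \((\phi,(Y\otimes Y)\phi)\), using \(Y e_0=ie_1\), \(Y e_1=-ie_0\) and real amplitudes, and the algebraic simplification of the covariance to \(ad-bc\). Both are routine and match the computation already done in Theorem~\ref{thm:quantum}.
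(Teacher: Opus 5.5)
Your proof is correct and follows the paper's argument: it uses the same identity $\Cov(U,V)=ad-bc=(\sqrt{ad}-\sqrt{bc})(\sqrt{ad}+\sqrt{bc})$ and the same evaluation $-(\phi,(Y\otimes Y)\phi)=2(\sqrt{ad}-\sqrt{bc})$. The only difference is that the paper uses the crude bound $\sqrt{ad}+\sqrt{bc}\le 2$, while your AM--GM bound $\sqrt{ad}+\sqrt{bc}\le\tfrac12$ gives $\Cov(U,V)\le\tfrac14\bigl(-(\phi,(Y\otimes Y)\phi)\bigr)$; carried through the proof of Theorem~\ref{thm:main}, this improves its right-hand side by a factor of $4$ (e.g.\ $n/8$ instead of $n/2$ in \eqref{eq:average}, which is attained for $n=2$, $X_1=X_2\sim\mathrm{Bernoulli}(1/2)$).
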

\begin{proof}
Since
\[
\Cov(U,V)=ad-bc,
\]
we have
\[
\Cov(U,V)
=
(\sqrt{ad}-\sqrt{bc})(\sqrt{ad}+\sqrt{bc})
\le
2(\sqrt{ad}-\sqrt{bc}).
\]
On the other hand,
\[
-(\phi,(Y\otimes Y)\phi)
=
2(\sqrt{ad}-\sqrt{bc}),
\]
and the proof follows.
\end{proof}

\begin{proof}[Proof of Theorem~\ref{thm:main}]
Fix distinct \(i,j\). Set:
\[
Z=X_{[n]\setminus\{i,j\}}.
\]
For an outcome \(Z=z\), write the conditional law of \((X_i,X_j)\) as:
\[
\begin{array}{c|cc}
 & X_j=1 & X_j=0\\ \hline
X_i=1 & a_z & b_z\\
X_i=0 & c_z & d_z.
\end{array}
\]
The \(\MTP\) inequality on this face gives: 
\[ 
a_zd_z\ge b_zc_z. 
\]
Let again
\[
p_z=\mathbb P(Z=z),
\]
and let \(\psi_z\) be the conditioned state as in \eqref{psi_z}:
\[
\psi_z = \alpha_z e_{11z} +\beta_z e_{10z} +\gamma_z e_{01z} +\delta_z e_{00z},\quad 
\alpha_z = \sqrt{a_z},\  
\beta_z = \sqrt{b_z},\ 
\gamma_z = \sqrt{c_z},\ 
\delta_z = \sqrt{d_z}.
\]
From Lemma~\ref{2qubit}:
\[
0 \le 
\Cov(X_i,X_j\mid Z=z) \le 
-(\psi_z,(Y\otimes Y)\psi_z),
\]
Then:
\[
0 \le C_{ij} = \E\!\left[
\Cov(X_i,X_j\mid Z)
\right] \le 
-\sum_zp_z
(\psi_z,(Y\otimes Y)\psi_z).
\]
From Lemma~\ref{total}:
\[
C_{ij} \le -(\psi,Y_iY_j\psi) = L_{ij}.
\]
For every collection of nonnegative weights 
\(w_{ij}\), 
\[ 
\sum_{i<j}w_{ij}C_{ij} \le \sum_{i<j}w_{ij}L_{ij}. 
\] 
Using Theorem~\ref{thm:quantum}, we obtain: 
\[ 
\sum_{i<j}w_{ij}C_{ij} \le 
2\,\operatorname{MaxCut}(G,w)-W,\quad 
W=\sum_{i<j}w_{ij},
\] 
and the proof follows. \qed
\end{proof}

\section{On a conjecture of Allen, 
O'Donnell, and Zhou}\label{ODonnell}

Allen, O'Donnell, and Zhou \cite{AllenODonnell} introduced:
\[
\operatorname{avgCov}_{\mid t}(X)
:=
\operatorname*{avg}_{\substack{J\subset[n]\\ |J|=t}}
\operatorname*{avg}_{\substack{i,j\in[n]\setminus J\\ i\ne j}}
\E\!\left[
\left|
\Cov\!\left(
X_i,X_j
\,\middle|\,
X_J
\right)
\right|
\right].
\]
They conjectured that for every
\(\varepsilon>0\), there exists
\[
t=O(1/\varepsilon)
\]
such that:
\begin{equation}\label{eq:conj_A}
\operatorname{avgCov}_{\mid t}(X)\le\varepsilon.
\end{equation}
This is their Conjecture A. 
\smallskip%

Raghavendra and Tan~\cite{RagTan} proved the weaker estimate:
\[
t=O(\varepsilon^{-2}).
\]
The conjecture is false for general measures, as 
shown by Jain, Koehler, and Risteski~\cite{Jain19}. For \(\MTP\) measures, however, it follows from 
\eqref{eq:average}.
\begin{proof}
Setting
\[
U = J \cup \{i,j\},
\]
we have
\[
\operatorname{avgCov}_{\mid t}(X)
=
\operatorname*{avg}_{\substack{U\subset[n]\\ |U|=t+2}}
\operatorname*{avg}_{\substack{i,j\in U\\ i\ne j}}
\E\!\left[
\left|
\Cov\!\left(
X_i,X_j
\,\middle|\,
X_{U\setminus\{i,j\}}
\right)
\right|
\right].
\]
Since \(\MTP\) is preserved under marginalization, \(X_U\) has an \(\MTP\) law. From
\eqref{eq:average}:
\[
\sum_{\substack{i<j\\ i,j\in U}}
\E\!\left[
\left|
\Cov\!\left(
X_i,X_j
\,\middle|\,
X_{U\setminus\{i,j\}}
\right)
\right|
\right]
\le
\frac{t+2}{2}.
\]
Since there are
\[
\frac{(t+1)(t+2)}{2}
\]
pairs in \(U\), it follows that:
\[
\operatorname*{avg}_{\substack{i,j\in U\\ i\ne j}}
\E\!\left[
\left|
\Cov\!\left(
X_i,X_j
\,\middle|\,
X_{U\setminus\{i,j\}}
\right)
\right|
\right]
\le
\frac{1}{t+1}.
\]
Averaging over \(U\):
\[
\operatorname{avgCov}_{\mid t}(X)
\le
\frac{1}{t+1},
\]
and \eqref{eq:conj_A} follows. \qed
\end{proof}

\bibliographystyle{plain}
\bibliography{ref}

\end{document}